\theoremstyle{plain}
\newtheorem{theorem}{Theorem}
\theoremstyle{definition}
\newtheorem{definition}{Definition}
\newtheorem{example}{Example}
\newtheorem{question}{Question}
\newtheorem{open problem}{Open problem}
\theoremstyle{remark}
\definecolor{Gray}{gray}{0.9}
\definecolor{LightCyan}{rgb}{0.88,1,1}
\definecolor{Red}{rgb}{1,0,0}
\definecolor{Orange}{rgb}{1,0.5,0}
\def\@fnsymbol#1{\ensuremath{\ifcase#1\or *\or \dagger\or \ddagger\or
   \mathsection\or \mathparagraph\or \|\or **\or \dagger\dagger
   \or \ddagger\ddagger \else\@ctrerr\fi}}
\title{A new approach to catalog small graphs of high even girth }
\author{Vivek S. Nittoor\\
\small\tt vivek@nittoor.com\\
\small\tt Independent Consultant \& Researcher \\ \small\tt (formerly with the University of Tokyo)}
\date{} 
\begin{document}
\maketitle

\begin{abstract}
{A catalog of a class of $(3, g)$ graphs for even girth $g$ is introduced in this paper.  A $(k, g)$ graph is a regular graph with degree $k$ and girth $g$. 
This catalog of $(3, g)$ graphs for even girth $g$ satisfying $6 \le g \le 16$, has the following properties. Firstly, this catalog contains the smallest known $(3, g)$ graphs. An appropriate class of cubic graphs for this catalog has been identified, such that the $(3, g)$ graph of minimum order within the class is also the smallest known $(3, g)$ graph. Secondly, this catalog contains $(3, g)$ graphs for more orders than other listings. 
Thirdly, the class of graphs have been defined so that a practical algorithm to generate graphs can be created. Fourthly, this catalog is infinite, since the results are extended into knowledge about infinitely many graphs.\\
The findings are as follows. Firstly, Hamiltonian bipartite graphs have been identified as a promising class of cubic graphs that can lead to a catalog of $(3,g)$ graphs for even girth $g$ with graphs for more orders than other listings, that is also expected to contain a $(3, g)$ graph with minimum order. Secondly, this catalog of $(3,g)$ graphs contains many non-vertex-transitive graphs. Thirdly, in order to make the computation more tractable, and at the same time, to enable deeper analysis on the results, symmetry factor has been introduced as a measure of the extent of rotational symmetry along the identified Hamiltonian cycle. The D3 chord index notation is introduced as a concise notation for cubic Hamiltonian bipartite graphs. The D3 chord index notation is twice as compact as the LCF notation, which is known as a concise notation for cubic Hamiltonian graphs. The D3 chord index notation can specify an infinite family of graphs. 
Fourthly, results on the minimum order for existence of a $(3, g)$ Hamiltonian bipartite graph, and minimum value of symmetry factor for existence of a $(3, g)$ Hamiltonian bipartite graph are of wider interest from an extremal graph theory perspective. 
}
\end{abstract}

\textbf{Keywords}: catalog, Hamiltonian bipartite, rotational symmetry, non-existence

\section{Introduction}
A $(k, g)$ graph is a regular graph with degree $k$ and girth $g$. Cubic graphs are regular graphs with degree $3$. Only graphs that are undirected and simple are considered in this paper.\\
Tutte 1947 \cite{42} posed the cage problem as a problem in extremal graph theory, which refers to finding the smallest $(k, g)$ graph, which is referred to as the $(k, g)$ cage. The order of a $(k, g)$ case is referred to as $n(k, g)$. This problem has been solved for a limited range of degree $k$ and specified girth $g$. The $(3, g)$ cage is currently known only for $g  \le 12$.\\
The difficulty in finding the smallest $(3, g)$ graph in general is illustrated in the following historical example. The $(3, 14)$ vertex-transitive\footnote{Definition \ref{vt_def} in subsection \ref{subsec_sym}} graph with order $406$ was found by Hoare \cite{Hoare1983} in 1981, and it was only in 2002 that a smaller $(3, 14)$ graph with order $384$ was found by Exoo \cite{Exoo} outside the vertex-transitive class. \\
A catalog of $(3, g)$ graphs for even girth $g$ has been listed by the present author in \cite{CatalogPaper}. This catalog of $(3, g)$ Hamiltonian bipartite graphs (HBGs) has graphs for the range of girth $g$ satisfying $6 \le g \le 16$. The detailed comparison of this catalog with other lists and the infiniteness of this catalog is discussed in \cite{OverallPaper2}. A discussion on $(3, 14)$ graphs and the partial likelihood of the $(3, 14)$ record graph found by Exoo being a cage has been provided in \cite{3_14Paper}.\\ 
The goals for this research are listed in this section, the related works for this research are discussed in section \ref{sec_background} and some important observations are made in subsection \ref{observations}, which motivate the catalog in section \ref{sec_catalog}. The consequent findings arising from this work are listed in section \ref{sec_findings}, and limitations and open problems in section \ref{sec_conclusion_graph_analysis}.

\subsection{Context of this catalog}
The enumeration of cubic symmetric\footnote{Definition \ref{sym_def} in subsection \ref{subsec_sym} } graphs began with the Foster census and was expanded by Conder et al. \cite{mconder} up to order $10,000$. In $2012$, Potočnik et al. \cite{VTcen1}, \cite{VTcen2} extended the list to cubic vertex-transitive graphs up to order $1280$. Sophisticated techniques in these developments are based upon the classification of finite simple groups. These lists provide useful data to various areas of graph theory. \\
A more detailed literature review of catalogs or lists of graphs is provided in section \ref{sec_background}, that points out that the enumeration of all non-isomorphic cubic graphs for orders greater than 32 is currently open.

The goal of this research is to create a new catalog of $(3, g)$ graphs with graphs for many more orders than other known lists, with properties as follows.
\begin{enumerate}
\item \textbf{Goal 1}: This catalog should contain the smallest known $(3, g)$ graphs. As the class of vertex-transitive graphs does not attain this, this catalog should cover a class of graphs wider than vertex-transitive graphs. One task is to identify an appropriate class of graphs for this catalog, such that the $(3, g)$ graph of minimum order within the class is also the smallest known $(3, g)$ graph.
\item  \textbf{Goal 2}: This catalog should contain graphs with larger number of orders and larger girths than the existing listings, with graphs outside the vertex-transitive class for many orders. This will not be attained if all non-isomorphic graphs are enumerated. To overcome this hurdle, subclasses are defined, and list one representative from each subclass (if the subclass is not empty).
\item  \textbf{Goal 3}: The class of graphs for goal 1 and subclasses for goal 2 should be defined so that a practical algorithm to generate graphs can be created. This is challenging because of the difficulty of generating graphs of large girths. 
\item  \textbf{Goal 4}: The resulting graph catalog should be analyzed. As computational approach is obviously limited in that it can create finite number of results, an analysis should be developed that extends the results into knowledge about infinitely many graphs and knowledge in extremal graph theory perspective.
\end{enumerate}

\subsection{About Hamiltonicity and bipartiteness}
The choice of class of cubic graphs for this catalog \cite{CatalogPaper} is Hamiltonian bipartite. This is discussed further in section \ref{sec_app}. Some of the background behind making this choice is explained in this subsection.\\
The Lovasz conjecture 1969 \cite{Lovasz} states, ``Every finite connected vertex-transitive graph contains a Hamiltonian path''. A variant of this conjecture is as follows, ``Every finite connected vertex-transitive graph contains a Hamiltonian cycle, with the exception of the five known counterexamples.'' This conjecture is currently unsettled. All known vertex-transitive graphs are Hamiltonian, with the exception of the five counterexamples which are the Petersen graph, complete graph of order 2, $K_2$, the Coxeter graph, and two graphs derived from the Petersen graph and Coxeter graph by replacing each vertex with a triangle.\\
A recent study on non-Hamiltonian $3$-regular graphs of arbitrary girth by Haythorpe $2014$ \cite{Haythorpe2014} shows that the smallest known non-Hamiltonian $3$-regular graphs of girth $g$ are larger than the corresponding $(3, g)$ cages until $g=7$. 
The smallest order for a non-Hamiltonian $(3, 6)$ graph from Haythorpe $2014$ \cite{Haythorpe2014} is found to be 30, and the $(3, 6)$ cage has order 14.
Hence, it does appear that Hamiltonicity might be an important requirement for finding small graphs of large girth.\\
It is also well known that all the currently known $(3, g)$ cages for even girth $g$ are Hamiltonian. Open problem \ref{conj_chord_Hcg} is raised in Section \ref{chap_14_16_conclu} about whether every $(3, g)$ cage, other than the known exception of the $(3, 5)$ cage, the Petersen graph, has a Hamiltonian cycle.\\
All the currently known cages for even girth are bipartite. An still open problem raised by Harary 1970s, and Wong 1982 \cite{30}, ``Is every cage of even girth bipartite?''\\
When one considers the smallest non-bipartite vertex-transitive graph for even girth in Table \ref{Non-bipartite}, and when one compares it with Table \ref{table_sub_problems_cayley_vt}, it is clear that the empirical data suggests that the order of the smallest non-bipartite vertex-transtive graph of even girth is much greater than the smallest vertex-transtive graph of even girth.\\
Hence, Hamiltonian bipartite could be a very promising class of cubic graphs for the catalog of graphs of large even girth. 



\begin{table}
\centering
\caption{Order of smallest non-bipartite vertex-transitive graph for even girth from Potočnik et al. 2012 \cite{VTcen1} and \cite{VTcen2}}
\label{Non-bipartite} 
\begin{tabular}{cccccclllll}
\hline\noalign{\smallskip}
Girth $g$ & Order of smallest non-bipartite vertex-transitive graph  \\  
\noalign{\smallskip}
\hline
\noalign{\smallskip} 
6 & 24 \\
8 & 42 \\
10 & 100 \\
12 & 234\\
14 & 486\\
\hline
\end{tabular}
\end{table}

\subsection{About symmetry}
\label{subsec_sym}

It is well known that most graphs are asymmetric, that is, have no non-trivial automorphisms. The various kinds of graph symmetry defined in the literature have been based on the properties of the automorphism group of the graph, and some of them are defined in this subsection.
\begin{definition} \textbf{Vertex-transitive} Lauri J. et al. \cite{Sym_book} \\
\label{vt_def}
``We say that a graph $G$ is \textit{vertex-transitive} if any two vertices of $G$ are similar, that is, if, for any $u, v \in V(G)$, there is an automorphism $\alpha$ of $G$ such that $\alpha(u) = v$.'' 
\end{definition}
\begin{definition} \textbf{Edge-transitive} Lauri J. et al. \cite{Sym_book} \\
 ``A graph $G$ is \textit{edge-transitive} if, given any two edges $\{a, b\}$ and $\{c, d\}$, there exists an automorphism $\alpha$ such that $\alpha\{a, b\} = \{c, d\}$, that is, $\{\alpha(a),\alpha(b)\} = \{c, d\}$.''
 \end{definition}
\begin{definition}  \textbf{Semi-symmetric} \\
An \textit{edge-transitive} graph that is also regular but not \textit{vertex-transitive} is said to be \textit{semi-symmetric}. 
\end{definition}
The smallest \textit{semi-symmetric} graph is known to be the Gray graph of order 54, which is known to be cubic, Hamiltonian, and bipartite. 

\begin{definition} \textbf{Symmetric graph} \\
\label{sym_def}
A graph is said to be \textit{symmetric} if it is both vertex-transitive as well as edge-transitive. 
\end{definition}
\begin{definition} \textbf{Arc-transitive} Lauri J. et al. \cite{Sym_book}\\
``If $G$ has the property that, for any two edges $\{a, b\},\{c, d\}$, there is an automorphism $\alpha$ such that $\alpha(a) = c$ and $\alpha(b) = d$ and also an automorphism $\beta$ such that $\beta(a) = d$ and $\beta(b) = c$, then $G$ is said to be \textit{arc-transitive}.''
\end{definition}

\begin{definition} \textbf{Distance-transitive} Lauri J. et al. \cite{Sym_book}\\
``A graph $G$ is said to be \textit{distance-transitive} if given any vertices $a, b, c, d$ such that $\rho(a, b) = \rho(c, d)$, there is an automorphism $\alpha$ of $G$ such that $\alpha(a) = c$ and $\alpha(b) = d$.'' 
\end{definition}

Many of the known cages have a high level of symmetry. The $(3, 6)$ cage, the Heawood graph is known to be bipartite, Hamiltonian, distance-transitive, vertex-transitive, edge-transitive and hence symmetric. The $(3, 8)$ cage, the Tutte-Coxeter graph is known to be bipartite, Hamiltonian, distance-regular, distance-transitive, vertex-transitive, edge-transitive and hence symmetric. The $(3, 12)$ cage, the Tutte-12 cage or Benson graph is known to be bipartite, Hamiltonian, edge-transitive but not vertex-transitive and hence semi-symmetric.\\
The LCF notation was introduced by Lederberg \cite{Lederberg} and by Frucht \cite{LCFpaper}. The LCF notation is general in the sense that it can represent any arbitrary cubic Hamiltonian graph.\\
An interesting result from Rodriquez \cite{Rodriquez} is the following, `` A cubic graph $G$ is Hamiltonian if and only if it has a representation in LCF notation.''\\
The D3 chord index notation has been introduced in Definition \ref{notation_trivalent_bg_m} in Section \ref{sec_catalog}, has been motivated by circulant graphs and the LCF notation, but is meant to represent cubic Hamiltonian bipartite graphs in a more compact manner than the LCF notation. Symmetry factor as a measure of rotational symmetry as per chosen Hamiltonian cycle has been introduced in Definition \ref{def_sym_fac} in Section \ref{sec_catalog}, and is similar to the exponent in the LCF notation. Similar to the LCF notation, the D3 chord index notation becomes more compact for cubic Hamiltonian bipartite with rotational symmetry.

\section{Related Works}
\label{sec_background}

The focus of this section is a review of the literature on tables and catalogs of graphs.\\
Various researchers have commented on the importance of tables of graphs. Quoting from Read 1981 \cite{Read1981}: ``In this paper I shall give some information on recent advances in the generation of catalogues of graphs; but first it might be as well as to say a little about why one would want to generate such catalogues at all – why, for example, one would wish to produce all the graphs on 8 vertices.
There are many uses to which such a list could be put. Scrutiny of the list by hand or by computer, may suggest conjectures, or settle some question by turning up a counterexample. It may also enable one to get general ideas about graphs and their properties. Sometimes a list of graphs will supply numerical information for enumerative problems where a theoretical solution is absent, or provide a source of specimen graphs can be taken for use in one of the real-life problems to which graph theory can be applied''.\\
Faradzhev 1976 in \cite{Faradzev}, said that graph theory was in a botonical stage of development and a ``herbarium'' of graphs was a useful thing to have around.\\
A recent paper by Brinkmann et al. 2012 \cite{HoG} states, ``Such lists can serve as a source for intuition when one studies some conjecture and even as a possible source for counterexamples.''
The important survey papers found in the area of graph generation are Read 1981 \cite{Read1981} which focusses on simple graphs and Brinkmann et al. 2013 \cite{Brinkmann_hist2013} which focusses on cubic graphs.

\subsection{Research on enumeration of simple graphs}

An important research on counting of simple graphs is described in this subsection. The number of simple graphs for relatively small orders is known to be large. Harary et al. 1973 \cite{HararyPalmer} provide an asymptotic formula for number of simple graphs for a specified order. Read 1981 \cite{Read1981} lists early developments for listing graphs. Kagno 1946 \cite{Kagno1946} enumerated and showed that the number of simple graphs with order $6$ is $156$. Heap B.R. 1972 \cite{Heap1972}  enumerated and showed that the number of simple graphs with order $8$ is $12346$. Baker, Dewdney et al. 1974 \cite{Baker1974} enumerated and showed that the number of simple graphs with order $9$ is $274668$. The number of simple graphs with order $10$ is mentioned as $12005168$ by Harary et al. 1973 \cite{HararyPalmer}. 
The number of connected graphs with order $14$ is given as $29003487462848061$ in Brinkmann et al. \cite{HoG} 2013. The number for simple graphs for order 24 is 195704906302078447922174862416726256004122075267063365754368 as per Keith M. Briggs, Combinatorial Graph Theory \url{http://keithbriggs.info/cgt.html}.



\subsection{Research on enumerating cubic graphs}


\begin{table}
\caption{History of enumeration of cubic graphs from Read 1981 \cite{Read1981} and Brinkmann et al. 2013 \cite{Brinkmann_hist2013}}
\label{table_con_triv}
\centering
\begin{tabular}{cccccc}
\hline
Up to order & Author  & Year & Reference\\
\noalign{\smallskip}
\hline
\noalign{\smallskip}

10 & Vries  & 1889, 1891  & \cite{Vries1889},  \cite{Vries1891}\\
12 & Balaban & 1967  & \cite{Balaban1967}\\
14 & Bussemaker et al. & 1976 & \cite{Bussemaker1976}\\
18 & Faradzev & 1976& \cite{Faradzev} \\
20 & McKay et al. & 1986 & \cite{McKay1986} \\
24 & Brinkmann & 1992  & \cite{Brinkmann1992} \\
26 & Sanjmyatav et al. & 2000 & \cite{Sanjmyatav2000} \\
32 & Brinkmann et al. & 2011 & \cite{Brinkmann2011_gen}\\
\hline
\end{tabular}
\end{table}

Quoting from Meringer 1999 \cite{Meringer1999}, ``The construction of complete lists of regular graphs up to isomorphism is one of the oldest problems in constructive combinatorics.'' A recent survey paper on the history of generation of cubic graphs, Brinkmann et al. 2013 \cite{Brinkmann_hist2013} lists many of the early papers and key developments in this area. In the $19^{th}$ century, Vries  \cite{Vries1889},  \cite{Vries1891} enumerated all cubic graphs until order $10$. Balaban 1967 \cite{Balaban1967} enumerated all 3-regular graphs up to order 12. Bussemaker et al. 1976 \cite{Bussemaker1976} enumerated and showed that the number of cubic graphs with order up to $14$ is $504$.\\
The number of connected cubic graphs obtained from Robinson et al. 1983 \cite{Robinson1983} is 117 940 535 for order 24, and 8832736318937756165 for order 40.\\
The number of cubic bipartite graphs with order $24$ is given as $29579$ in a recent study by Brinkmann et al. 2013 \cite{Brinkmann2013}. Quoting from \cite{Brinkmann2013}, "Cubic graphs (alias trivalent graphs) constitute an important family of graphs that are worth enumerating and generating."\\ 
Table \ref{table_con_triv} shows important papers in the history of enumeration of cubic graphs until order 32, and enumeration of cubic graphs for orders greater than 32 is currently unresolved.

\subsection{Enumeration of classes of cubic graphs}
The enumeration of cubic symmetric graphs that began with the Foster Census \cite{FosterC} up to order $512$ and recently generalized to order $10000$ by Conder et al. \cite{mconder} and available at \url{https://www.math.auckland.ac.nz/~conder/symmcubic10000list.txt} is an important application of the classification of finite, simple groups. The enumeration of cubic vertex-transitive graphs by Potočnik et al. \cite{VTcen1}, \cite{VTcen2} in $2012$ is a logical extension of the enumeration of cubic symmetric graphs. Cubic symmetric graphs are a subset of cubic vertex-transitive graphs. These enumerations are important from a wider graph theory perspective, because they provide new knowledge about the class of graphs being enumerated and also provide useful data of interest to other problems from graph theory. For example, the enumerations of cubic vertex-transitive graphs have been used to confirm the order of smallest $(3, g)$ vertex-transitive graph until $g = 16$ in \cite{VTcen1}.\\
The number of non-isomorphic cubic vertex-transitive graphs of order of at most 1280 is mentioned as 111360 by Potočnik et al. in 2012.

\subsection{Extremal graph theory and cage problem}
The subject of extremal graph theory originated in the $1940$s due to Turan and Erdos. Extremal graph theory concerns itself with various invariant properties of a graph. Given any property for a class of graphs, an extremal graph theory question would be to determine the smallest value of a graph invariant such that it satisfies the specified property.\\
The known lower and upper bounds for the cubic cage problem are extremely wide for larger values of girth $g$, and hence finding better bounds and more cages is a challenging problem. The latest list of known cages is available at \cite{48}. 
Computational methods have been used to improve lower bounds of the cage problem by Brinkmann et al. \cite{Brinkmann} in $1995$ for $(3, 9)$ graphs, McKay et al. \cite{105} in $1998$ for $(3, 11)$ graphs, and by Exoo et al. \cite{112} in $2011$ for $(4, 7)$ graphs by showing the non-existence of $(4, 7)$ graphs with less than order $67$, and finding a $(4, 7)$ graph with order $67$. The lower bound for $(3, 14)$ graphs was improved from $256$ to $258$ by McKay et al. \cite{105} in $1998$.

\subsection{Other lists of graphs}

The important lists of graphs that we find on the internet are as follows.
\begin{enumerate}
\item \url{http://cs.anu.edu.au/~bdm/data/} 
Some of the enumerations of graphs are as follows.
\begin{itemize}
\item Eulerian graphs.
\item Strongly regular graphs.
\item Ramsey graphs. A survey can be found in Radziszowski \cite{Radziszowski} 2014.
\item Hypohamiltonian graphs.
\item Planar graphs.
\item Semiregular bipartite graphs.
\item Self-complementary graphs.
\item Highly irregular graphs.
\item Digraphs: Tournaments, Regular and Semi-regular Tournaments, Locally-transitive Tournaments, Acyclic digraphs.
\item Multigraphs 
\end{itemize}
\item \url{http://www.mathe2.uni-bayreuth.de/markus/reggraphs.html}
Meringer \cite{Meringer1999} 1999 lists many kinds of graphs. 
\begin{itemize}
\item Connected regular graphs with girth at least 6: The number of connected regular graphs with girth at least 6 with order 30 is given as 122090544.  
\item Connected regular graphs with girth at least 4.
\item Connected regular graphs with girth at least 5.
\item Connected regular graphs with girth at least 6.
\item Connected regular graphs with girth at least 7.
\item Connected regular graphs with girth at least 8.
\item Connected bipartite regular graphs.
\item Connected planar regular graphs.
\end{itemize}
\item \url{http://www.maths.gla.ac.uk/~es/}
Strongly regular graphs with order of at most 64 have been listed by Spence. The relevant papers are Spence \cite{Spence1} 2000, Spence \cite{Spence2} 1995, McKay et al. \cite{Spence3} 2001, Haemers et al. \cite{Spence4} 2001, and Coolsaet et al. \cite{Spence5} 2006.
\item House of Graphs  \url{http://hog.grinvin.org} 
Brinkmann et al. \cite{HoG} 2013 lists graphs from many different sources mentioned above in this survey, but this list does not contain graphs of high girth. For example, a search for graphs with girth greater than or equal to 12 yields only one graph.
\item Coxeter et al. 1981 \cite{ZS_census} lists a census of zero-symmetric graphs\footnote{A zero-symmetric graph is a cubic vertex-transitive graph with an automorphism group that partitions its edges into three orbits} until order 120. The catalog of cubic vertex-transitive graphs of Potočnik et al. 2012 \cite{VTcen1}, \cite{VTcen2} extends the census of zero-symmetric graphs until order 1280.
\item Conder at al. 2006 \cite{Conder_semi} listed a census of cubic semi-symmetric graphs until order 768.

\end{enumerate}
The only sources for graphs of high girth are literature of the cage problem and enumerations of cubic vertex-transitive and symmetric graphs.

\subsection{Other computational approaches to find graphs}

The two researches that have some similarity to our work are McKay et al. \cite{105} 1997 and Meringer \cite{Meringer1999} 1999. The key difference is that isomorphism rejection is not used in our approach, and we focus on Hamiltonian bipartite class of cubic graphs.

\subsection{Other related researches}
Holton et al. \cite{Holton85} 1985 show that 3-connected cubic bipartite planar graphs with orders lesser than $66$ are Hamiltonian. After removing the restriction on bipartite graphs, Holton et al. \cite{Holton88} 1988 show that the smallest non-Hamiltonian 3-connected cubic planar graphs has order $38$.
Other studies on regular graphs of high girth are as follows.
\begin{enumerate}
\item McKay \cite{McKay1987} 1987 studied properties of independent sets on regular graphs of high girth.
\item Goldberg \cite{Goldberg1992} 1992 discusses the problem of designing an efficient algorithm for listing all graphs with order $n$, listing one from each isomorphism class of graphs with order $n$.
\end{enumerate}

\subsection{Observations}
\label{observations}
Some observations from the literature review mentioned earlier in this section are made here as follows. 

\begin{enumerate}
\item \textbf{Difficulty of finding cubic graphs of large girth}\\
As pointed out earlier, there is an inherent difficulty in finding cubic graphs of large girth outside the vertex-transitive class. 
\item \textbf{Large number of non-isomorphic cubic graphs for orders greater than 32}\\ 
As we have seen in this section, 
in general there are a large number of non-isomorphic cubic graphs for a specified order, thus making it practically very difficult to list all of them for orders greater than 32. 
\item \textbf{Beyond the class of cubic vertex-transitive graphs}\\
The main literature for cubic graphs of high girth are that of the cage problem and enumerations of cubic vertex-transitive and symmetric graphs. Given the list of cubic vertex-transitive graphs, we can find the $(3, g)$ vertex-transitive graph with the minimum order in the list. The minimum orders for $(3, g)$ Cayley and $(3, g)$ vertex-transitive graphs are obtained from Potočnik et al. \cite{VTcen1} and \cite{VTcen2} 2012, and the lower and upper bounds from Exoo et al. 2011 \cite{Jajcaysurvey} and are shown in Table \ref{table_sub_problems_cayley_vt} for even girth $g$. 
There are two main limitations of currently known listings of $(3, g)$ graphs. Firstly, there is currently no other known listing of $(3,g)$ graphs, where the minimum $(3,g)$ graph within the class is also the smallest known $(3, g)$ graph for $g > 8$. Secondly, there do not exist $(3, g)$ vertex-transitive graphs for many orders. For example, there does not exist a $(3, 10)$ vertex-transitive graph with order less than 80. The smallest three orders for which $(3, 14)$ vertex-transitive graphs exist, as mentioned by Potočnik et al. 2012  \cite{VTcen1} and \cite{VTcen2} are 406, 448 and 480. 
\end{enumerate}

\begin{table}
\centering
\caption{Values of minimum order of Cayley and vertex-transitive graphs for even values of $g \ge 6$ obtained from \cite{VTcen1} and \cite{VTcen2}, and lower and upper bounds obtained from \cite{Jajcaysurvey}}
\label{table_sub_problems_cayley_vt} 
\begin{tabular}{cccccclllll}
\hline\noalign{\smallskip}
Girth $g$ & $n_\mathit{Cayley}$ & $n_\mathit{Vertex\_transitive}$ &  $(3, g)$ Lower Bound &  $(3, g)$ Upper Bound \\  
\noalign{\smallskip}
\hline
\noalign{\smallskip} 
6 & 14 & 14 & 14 & 14\\
8 & 30 & 30 & 30 & 30\\
10 & 96 & 80 & 70 & 70\\
12 & 162 & 162 & 126 & 126\\
14 & 406 & 406 & 258 & 384\\
16 & 1008 & 1008 & 512 & 960\\
\hline
\end{tabular}
\end{table}



\section{Catalog}
\label{sec_catalog}
\label{sec_app}

In this section, the approach to graph listing is explained, with the choice of Hamiltonian bipartite class of cubic graphs and focus on efficient representation as a first step towards listing $(3, g)$ graphs within the identified class. 
%
%
The important steps in the approach to find graphs of high girth can be described as follows.
\begin{enumerate}
\item The search space for computer search is restricted to Hamiltonian bipartite class of cubic graphs.
\item An efficient representation for Hamiltonian cubic bipartite graphs with a specified level of rotational symmetry.
\item  A range of rotational symmetries are chosen for each value of $g$ such that a $(3, g)$ HBG with that level of rotational symmetry could be found by computer search.
\item We treat $(3, g)$ graphs of a particular level of rotational symmetry within the identified class of cubic graphs as a subclass, and seek to list at least one representative from each subclass.
\end{enumerate}

\subsection{Approach}
\label{sec_summary_approach_key_contributions}
\label{sec_comp_cage_problem}

The number of Hamiltonian cubic bipartite graphs are too numerous in order to practically enumerate them. Thus, as a first step, we try to list $(3, g)$ HBGs for each order for which $(3, g)$ HBGs exist until $2^{3g/4}$. The number $2^{3g/4}$ is motivated by the following open problem, ``Finding an infinite family of cubic graphs with large girth $g$ and order $2^{cg}$ for $c < 3/4$.'' from Exoo et al. \cite{Jajcaysurvey} 2011.
A computational approach for finding and listing $(3, g)$ HBGs for even girth $g$ for a range of orders until $2^{3g/4}$ and hence find a $(3, g)$ HBG with the minimum order, has been introduced. 

Hamiltonian bipartite have been identified as a promising class of cubic graphs for obtaining $(3, g)$ graphs of high even girth $g$. Every $(3, g)$ HBG of minimum order is also a $(3, g)$ cage for even girth $g \le 12$. Hamiltonian cubic bipartite graphs represent one quadrant in the space of cubic graphs as shown in Figure \ref{graphdiagramoverall}. The other three quadrants are Hamiltonian non-bipartite graphs, non-Hamiltonian bipartite graphs and non-Hamiltonian non-bipartite graphs. There are no known enumeration methods for cubic graphs outside the set of cubic vertex-transitive graphs other than this research.

\begin{figure}[htbp]
\centering
\includegraphics[scale=0.96]{./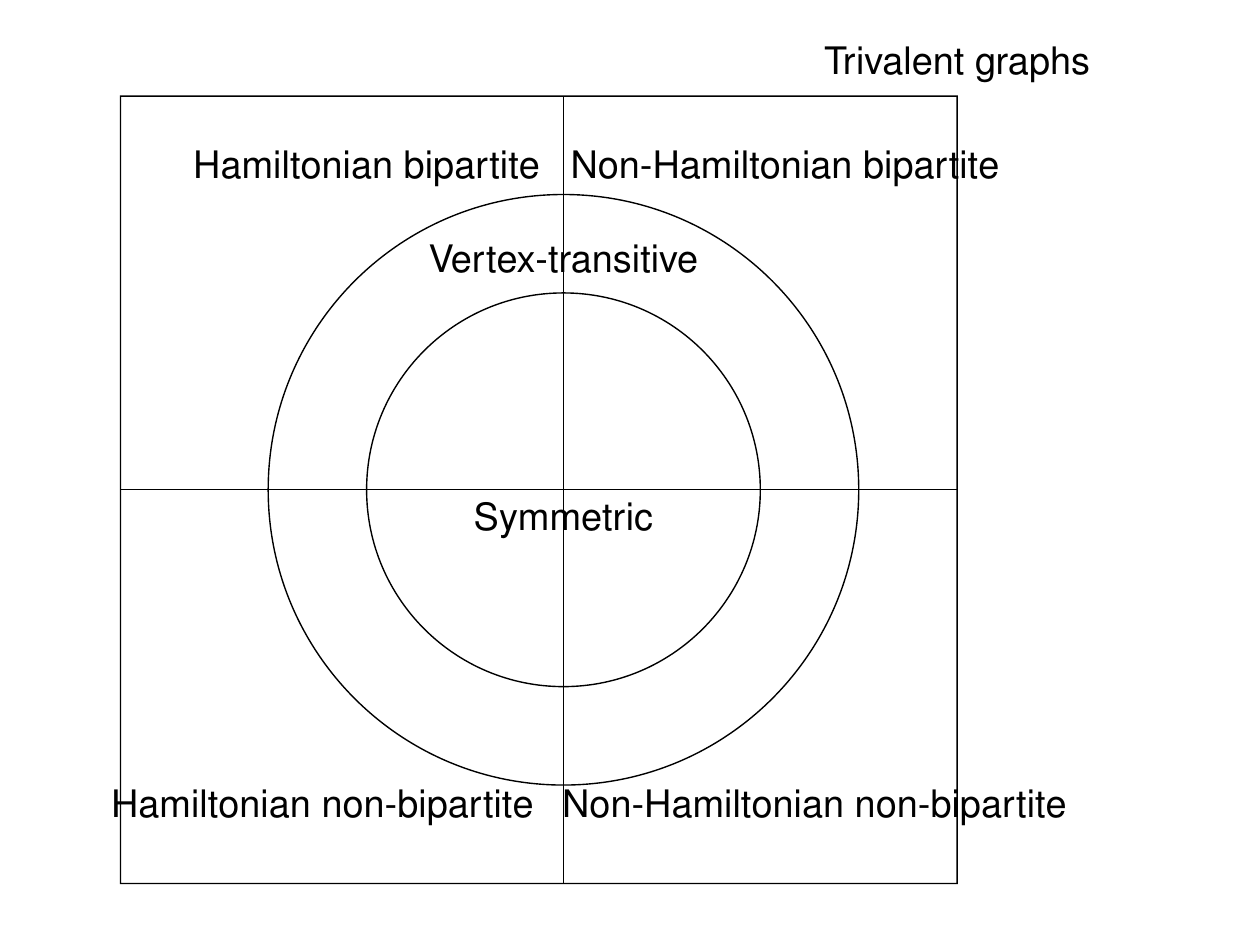}
\caption{Cubic graphs}
\label{graphdiagramoverall}
\end{figure}

The traditional method to represent a labeled graph is by its adjacency list. The LCF notation for representing Hamiltonian cubic graphs was introduced by Lederberg in \cite{Lederberg} and by Frucht in \cite{LCFpaper}.\\
The motivation for a compact notation for Hamiltonian cubic bipartite graph is as follows.
\begin{enumerate}
\item To represent any Hamiltonian cubic bipartite graph using fewer variables compared to the entire adjacency list. 
\item To represent any Hamiltonian cubic bipartite graph using fewer variables compared to the LCF notation.
\end{enumerate}

Even though the LCF notation is extremely convenient for representing a cubic HBG, it is not a minimal representation, and one of the motivations here is to propose a more compact graph representation format that would be practically useful in order to represent HBGs and also find HBGs by computer search. The D3 chord index notation for representing Hamiltonian cubic bipartite graphs, and a parameter called symmetry factor $b$ for a Hamiltonian cubic graph with order $2m$, where $b | m$ that reflects the extent of rotational symmetry are introduced as part of this work. As pointed out earlier, symmetry factor is very similar to the concept of exponent used in the LCF notation.



In general, any HBG of order $2m$ can be completely specified by the order and length of chords connected from alternate nodes. The concept of D3 chord index is introduced as length of chords connected from alternate nodes of a HBG and is formally introduced in Definition \ref{notation_trivalent_bg_m}. Thus, it is clear that any HBG has a D3 chord index representation. Further, by considering rotational symmetry along the Hamiltonian cycle, the concept of symmetry factor is introduced in Definition \ref{def_sym_fac_gen}.\\  
For a HBG of order $2m$ with rotational symmetry along Hamiltonian cycle, the D3 chord indices would typically look something like $d_1, d_2, ..., d_m$.
Hence, the same HBG can be more compactly represented by D3 chord indices $d_1, d_2, ..., d_b$, symmetry factor $b$ and order $2m$, assuming that $b$ divides $m$.\\
For other kind of symmetries such as folding symmetry, we would in general need to use the full symmetry factor representation.


\begin{definition} \textbf{D3 chord index notation} \\
\label{notation_trivalent_bg_m}
The D3 chord indices $l_{1}, l_{2}, \ldots, l_{m}$ for order $2m$ where each $l_{i}$ is an odd integer satisfying $3 \le l_{i} \le 2m - 3$ for $1\le i\le m$ is a labeled graph with order $2m$, with labels $1, 2, 3, \ldots, 2m$ constructed as follows. 
\begin{enumerate}
\item Vertex $1$ is connected to vertex $2m$,  vertex $2$ and vertex $1 + l_{1}$.
\item For integers $i$ satisfying $2 \le i \le m$, 
\begin{itemize}
\item Vertex $2i - 1$ is connected to vertex $2i - 2$.
\item Vertex $2i - 1$ is connected to vertex $2i$.
\item Vertex $2i - 1$ is connected to vertex $y_{i}$.
\end{itemize}
where $y_{i}$ is calculated as follows.
\begin{itemize}
\item $y_{i} = 2i - 1 + l_{i}$ if $2i - 1 + l_{i} \le 2m$.
\item $y_{i} = 2i - 1 + l_{i} \bmod 2m$ if $2i - 1 + l_{i} > 2m$.
\end{itemize}
\end{enumerate}
\end{definition}


\begin{definition}  \textbf{Symmetry factor for Hamiltonian cubic bipartite graph}\\
\label{def_sym_fac_gen}
\label{def_sym_fac}
A Hamiltonian cubic bipartite graph with order $2m$ is said to have symmetry factor $b \in \mathbb{N}$ if the following conditions are satisfied.
\begin{enumerate}
\item $b$ divides $m$. 
\item There exists a labelling of the vertices of the Hamiltonian cubic graph with order $2m$ have labels $1, 2, \ldots, 2m$, such that $1 \to 2 \to \ldots \to 2m \to 1$ is a Hamiltonian cycle that satisfy the following properties.
\begin{itemize}
\item The edges that are not part of the above Hamiltonian cycle are connected as follows.
Vertex $i$ is connected to vertex $u_{i}$ for $1 \le i \le 2m$.
\item If $j \equiv i \bmod 2b$ for $1 \le j \le 2b$ and $1 \le i \le 2m$ then the following is true, $u_{i} - i  \equiv u_{j} - j \bmod 2m$.
\end{itemize}
\end{enumerate}
\end{definition}


Figure \ref{graph_70_sym7_g10} shows an example of a $(3, 10)$ Cage with order $70$ and symmetry factor $7$.

\begin{figure}[htbp]
\centering
\includegraphics[scale=0.66]{./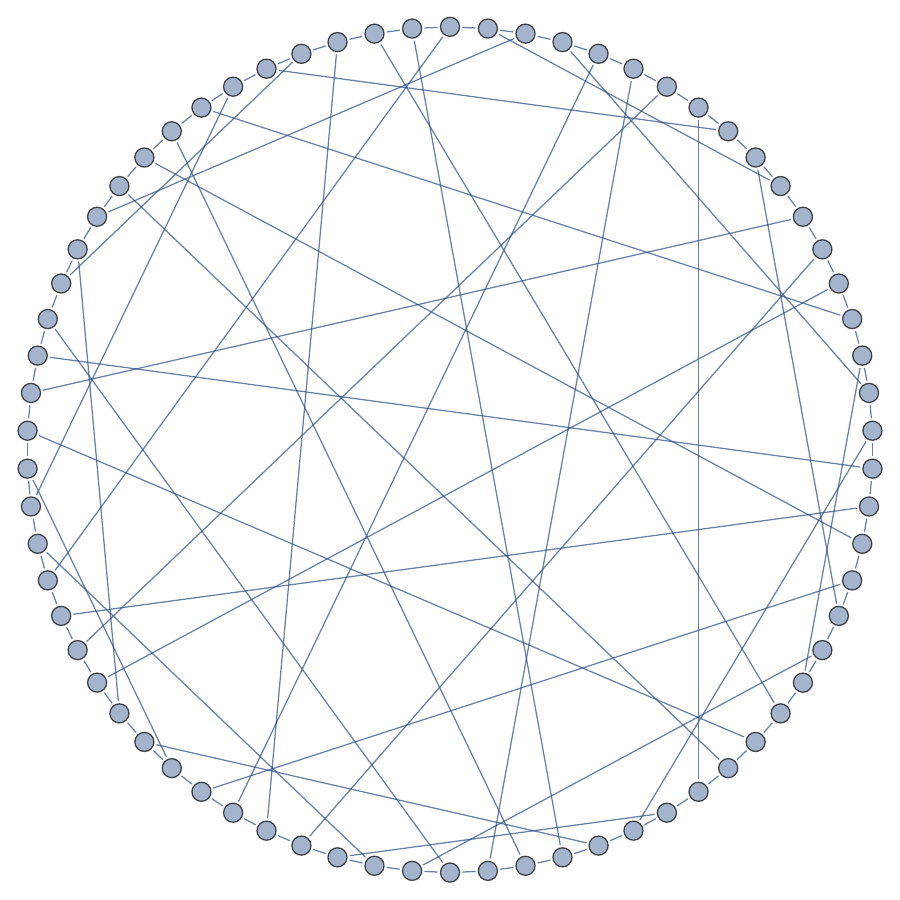}
\caption{\hspace{2em}  $(3, 10)$ Cage with order $70$ and symmetry factor $7$ found by Balaban 1972 \cite{32}}
\label{graph_70_sym7_g10}
\end{figure}

\begin{theorem} {\label{thm_sf_g}}
If a Hamiltonian cubic bipartite graph with symmetry factor $b$ has girth $g$ then $b \ge g/4 -1/2$.
\end{theorem}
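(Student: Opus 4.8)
The plan is to prove the bound in the equivalent form $g \le 4b+2$ by exhibiting an explicit cycle of length at most $4b+2$; since the girth is the length of a shortest cycle, that is all that is needed.

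First I would fix the labelling promised by Definition~\ref{def_sym_fac}: vertices $1,\dots,2m$ with Hamiltonian cycle $1\to 2\to\cdots\to 2m\to 1$, and, for each $i$, the unique chord at $i$ going to $u_i$. Two elementary facts set the stage. First, because the graph is simple and bipartite with colour classes the odd- and even-indexed vertices, the ``chord jump'' $c_i := (u_i-i)\bmod 2m$ is odd with $3\le c_i\le 2m-3$: it is odd by bipartiteness, and it avoids $1$ and $2m-1$ because a chord is not a Hamiltonian-cycle edge. Second, the symmetry-factor condition says precisely that $c_i$ depends only on $i\bmod 2b$, so $c_{i+2b}=c_i$ and the chord at $i+2b$ runs to $u_i+2b$; equivalently the rotation $v\mapsto v+2b$ is an automorphism, so the chords $\{i,u_i\}$ and $\{i+2b,u_i+2b\}$ are ``parallel''.

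Then I would pick any chord $\{i,u_i\}$, set $c:=c_i$, and split into three cases by the size of $c$. If $c\le 2b-1$, the Hamiltonian arc $i,i+1,\dots,i+c=u_i$ closed off by the chord is a cycle of length $c+1\le 2b$. If $2m-c\le 2b-1$, running the arc the other way gives a cycle of length $(2m-c)+1\le 2b$. In the remaining range $2b+1\le c\le 2m-2b-1$, I would take the closed walk $i\to i+1\to\cdots\to i+2b$ (Hamiltonian arc), then the chord to $u_i+2b$, then $u_i+2b\to u_i+2b-1\to\cdots\to u_i$ (Hamiltonian arc), then the chord back to $i$; this walk has length $2b+1+2b+1=4b+2$, and the two inequalities on $c$ are exactly what forces the arcs $\{i,\dots,i+2b\}$ and $\{u_i,\dots,u_i+2b\}$ to be vertex-disjoint (and the two chords to be distinct non-Hamiltonian edges), so it is a genuine $(4b+2)$-cycle. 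In every case a cycle of length at most $4b+2$ exists, hence $g\le 4b+2$, i.e.\ $b\ge g/4-1/2$.

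The only delicate point — and where I would spend the care — is the modular bookkeeping in the third case: checking that for odd $c$ with $3\le c\le 2m-3$ the three ranges really exhaust all possibilities (they do, since $c$ is odd and so misses the even cutoffs $2b$ and $2m-2b$), that in the third case the two Hamiltonian arcs consist of distinct vertices and are disjoint (which uses $b\mid m$, so $b<m$ gives $2b\le m$, while $b=m$ falls harmlessly into the first case), and that no accidental coincidence collapses the walk of length $4b+2$ into a shorter closed walk. None of this is deep, but the wrap-around mod $2m$ must be handled cleanly.
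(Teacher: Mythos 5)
Your proposal is correct and follows essentially the same route as the paper: the heart of both arguments is the cycle of length $4b+2$ formed by the chord $\{i,u_i\}$, its rotate $\{i+2b,u_i+2b\}$ guaranteed by the symmetry-factor condition, and the two Hamiltonian arcs of length $2b$ joining them, giving $g\le 4b+2$. The only difference is rigor: the paper's two-line proof simply asserts that cycle, while your first two cases (short chord, short complementary chord) and the disjointness bookkeeping handle the degenerate situations in which the asserted walk would not be a simple cycle, so your write-up is a more careful version of the same argument.
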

\begin{proof}
Let us visualize a cycle of length $2m$ with vertices numbered $1, 2, \ldots, 2m$.
If vertex $1$ is connected to vertex $j$ and vertex $2b + 1$ to vertex $2b + j$, then we have a cycle of length $4b +2$.
Since $4b +2 \ge g$ we obtain $b\ge g/4-1/2$ as required.
\end{proof}




%
\label{notation_D3}
The notation D3 is used to refer to D3 chord indices $l_{1}, l_{2}, \ldots, l_{b}$ for a Hamiltonian cubic bipartite graph with symmetry factor $b$ and order $2m$ where $b | m$.

The D3 chord indices for the $(3, 10)$ cage shown in Figure \ref{graph_70_sym7_g10} are 9\ 13\ 29\ 21\ 13\ 43\ 33 with symmetry factor 7. The LCF representation for the same graph is [-29,\ -19,\ -13,\ 13,\ 21,\ -27,\ 27,\ 33,\ -13,\ 13,\ 19,\ -21,\ -33,\ 29]$^{5}$. The D3 chord index notation is twice as compact as the LCF notation for Hamiltonian cubic bipartite graphs. 


\subsection{Summary of catalog}

This catalog has been obtained by computer search for $(3,g)$ HBG by enumerating the space of D3 chord indices for a specified order and symmetry factor. The current approach is to exit the search when a representative $(3,g)$ HBG is found for a particular order and symmetry factor. A summary of the catalog has been provided in Table \ref{table_sub_problems}. Symmetry factor allows the decomposition of the problem of listing $(3, g)$ HBGs to sub-problems of listing $(3, g)$ HBGs for a range of symmetry factors. This catalog of $(3, g)$ HBG is infinite, and a more detailed discussion is provided in \cite{OverallPaper2}.\\
This computational approach to find a $(3, g)$ HBG with a particular symmetry factor $b$, and specified order $2m$, can conclude that such as $(3, g)$ HBG does not exist, or at times even be \textit{inconclusive} on whether such a $(3, g)$ HBG exists or not.\\
The listing of $(3, g)$ HBGs for a particular even value of girth $g$ until $2^{3g/4}$ is considered to be \textit{exhaustive} if all orders in specified range that have a $(3, g)$ HBG are listed, with proof for non-existence for orders not listed.\\  
The outcome for listing of $(3, g)$ HBGs is \textit{partial} if results on existence $(3, g)$ HBG for some orders in specified range until $2^{3g/4}$ are \textit{inconclusive}.
A listing of $(3, g)$ HBGs for a particular symmetry factor $b$ is considered to be monotonic, when there exists an order $2m$ for which the smallest $(3, g)$ HBG with symmetry factor $b$ exists, such that there exist at least one $(3, g)$ HBG with symmetry factor $b$ exists for all orders $2m + 2bk$, where $k$ is a natural number. If a listing of $(3, g)$ HBGs for a particular symmetry factor $b$ is not monotonic, it is considered to be non-monotonic.

In Table \ref{table_sub_problems}, the smallest $(3, g)$ HBGs for various symmetry factors have been listed.

\begin{table}
\centering
\caption{: Finding $(3, g)$ HBGs of minimum order with symmetry factor $b$}
\label{table_sub_problems} 
\label{table_sub_problems_6_16}
\begin{tabular}{lclllll}
\hline\noalign{\smallskip}
$b$ & $(3, 6)$ & $(3, 8)$& $(3, 10)$ & $(3, 12)$ & $(3, 14)$  &  $(3, 16)$  \\  
\noalign{\smallskip}
\hline
\noalign{\smallskip} 

1 & \cellcolor{Gray}14, Cage &\cellcolor{Red}  &\cellcolor{Red}  &\cellcolor{Red}  &\cellcolor{Red}  & \cellcolor{Red}  \\
2 & &\cellcolor{Gray}36  &\cellcolor{Gray}80     &\cellcolor{Red} &\cellcolor{Red}  &\cellcolor{Red}    \\
3 & &\cellcolor{Gray}30, Cage &\cellcolor{Gray}90  &\cellcolor{Gray}162&\cellcolor{Red} &\cellcolor{Red}\\
4 & &\cellcolor{Gray}40 &\cellcolor{Gray}72 &\cellcolor{Gray}216 &\cellcolor{Gray}440\\
5 & &\cellcolor{Gray}40 &\cellcolor{Gray} 80&\cellcolor{Gray}190&\cellcolor{Gray}460 \\
6 & &\cellcolor{Gray} 36&\cellcolor{Gray} 84&\cellcolor{Gray}168&\cellcolor{Gray}456 \\
7 & \cellcolor{Gray}14, Cage&\cellcolor{Gray}42 &\cellcolor{Gray}70, Cage  &\cellcolor{Gray}182& \cellcolor{LightCyan}406$^{\#}$ \\
8 & &\cellcolor{Gray}48 &\cellcolor{Gray} 80&\cellcolor{LightCyan}208$^{\#}$ &\cellcolor{LightCyan}384$^{\#}$, Record & $\cellcolor{LightCyan}1568^{\#}$\\
9 & &\cellcolor{Gray} 54&\cellcolor{Gray} 90&\cellcolor{Gray}126, Cage& \cellcolor{LightCyan}504$^{\#}$  \\
10 & &\cellcolor{Gray}40 &\cellcolor{Gray}80 &\cellcolor{LightCyan}200$^{\#}$ &\cellcolor{LightCyan}520$^{\#}$  \\
11 & &\cellcolor{Gray}44  &\cellcolor{Gray}88&\cellcolor{Orange}  &\cellcolor{LightCyan} 506$^{\#}$  \\
12 & &\cellcolor{Gray}48 &\cellcolor{Gray}72&\cellcolor{LightCyan}216$^{\#}$  &\cellcolor{LightCyan}576$^{\#}$ \\
13 & &\cellcolor{Gray}52 & & &\cellcolor{LightCyan}572$^{\#}$ \\
14 & &\cellcolor{Gray}56 & & &\cellcolor{LightCyan}588$^{\#}$  \\
15 & &\cellcolor{Gray}30, Cage & & &\cellcolor{LightCyan}600$^{\#}$ \\
16 & &\cellcolor{Gray}64 &  & &\cellcolor{LightCyan}576$^{\#}$\\
35 & && \cellcolor{Orange} & \\
\hline
\end{tabular}
\begin{tabular}{lllll}
\hline\noalign{\smallskip}
Color & Significance  \\  
\noalign{\smallskip}
\hline
\noalign{\smallskip} 
\label{table_sub_problems_colors7} 
 \cellcolor{Gray} & $(3, g)$ sub-problem resolved between known lower and upper bound\\
\cellcolor{LightCyan}$^{\#}$ & Upper bound $(3, g)$ sub-problem for HBGs 
 found, but open to improvement\\
\cellcolor{Red} & Upper bound $(3, g)$ HBG sub-problem resolved 
 - Does Not  Exist\\
\cellcolor{Orange} & $(3, g)$ HBG sub-problem is \textit{inconclusive}\\
\hline
\end{tabular}
\end{table}

The lower bound for $(3, 14)$ is known to be 258 as per \cite{Jajcaysurvey}. The lower bound for a $(3, 14)$ HBG for a particular symmetry factor $b$, would be the smallest positive integer greater than or equal to 258 that is also divisible by $2b$, which we denote as $lb(3, 14, b)$. The lower bounds for some symmetry factors that have improved as part of this research are shown in Table \ref{table_comp314}.

\begin{table}
\centering
\caption{$(3, 14)$ sub-problems lower and upper bounds from \cite{3_14Paper}}
\label{table_comp314}
\begin{tabular}{cllllll}
\hline
Symmetry & $lb(3, 14, b)$  & Lower bound $(3, 14)$ HBG & Upper bound $(3, 14)$ HBG\\
factor $b$ &  &  for symmetry factor factor $b$ & for symmetry factor $b$ \\
\noalign{\smallskip}
\hline
\noalign{\smallskip}

\cellcolor{Red}3 &\cellcolor{Red}258 & \cellcolor{Red}900  & \cellcolor{Red}\\
\cellcolor{Gray}4 &\cellcolor{Gray}264 &\cellcolor{Gray}440 & \cellcolor{Gray}440 \\
\cellcolor{Gray}5 &\cellcolor{Gray}260& \cellcolor{Gray}460 & \cellcolor{Gray}460 \\
\cellcolor{Gray}6  &\cellcolor{Gray}264 & \cellcolor{Gray}456 & \cellcolor{Gray}456 \\
\cellcolor{Orange}7 &\cellcolor{Orange}266 &\cellcolor{Orange}364 & \cellcolor{Orange}406 \\
\cellcolor{Orange}8  &\cellcolor{Orange}272 &\cellcolor{Orange}304& \cellcolor{Orange}\textcolor{blue}{384} \\
\cellcolor{Orange}9 &\cellcolor{Orange}270 &\cellcolor{Orange}288  & \cellcolor{Orange}504 \\
\cellcolor{LightCyan}10 &\cellcolor{LightCyan}260 &\cellcolor{LightCyan}260 & \cellcolor{LightCyan}460  \\
\cellcolor{LightCyan}11 &\cellcolor{LightCyan}264 & \cellcolor{LightCyan}264 & \cellcolor{LightCyan}506 \\
\cellcolor{LightCyan}12 &\cellcolor{LightCyan}264& \cellcolor{LightCyan}264 & \cellcolor{LightCyan}456 \\
\cellcolor{LightCyan}13 &\cellcolor{LightCyan}260 & \cellcolor{LightCyan}260 & \cellcolor{LightCyan}572 \\
\cellcolor{LightCyan}14 &\cellcolor{LightCyan}280 & \cellcolor{LightCyan}280 & \cellcolor{LightCyan}588 \\
\cellcolor{LightCyan}15 &\cellcolor{LightCyan}270 & \cellcolor{LightCyan}270 & \cellcolor{LightCyan}510 \\
\cellcolor{LightCyan}16 &\cellcolor{LightCyan}288 & \cellcolor{LightCyan}288 & \cellcolor{LightCyan}\textcolor{blue}{384} \\
\hline
\end{tabular}
\begin{tabular}{lclllll}
\hline\noalign{\smallskip}
Color & Significance  \\  
\noalign{\smallskip}
\hline
\noalign{\smallskip} 
\label{table_sub_problems_colors} 
 \cellcolor{Red} & Found to not exist \\
\cellcolor{Gray} & Lower bound equals upper bound \\
\cellcolor{Orange} & Lower bound improved over $lb$\\
\cellcolor{LightCyan} & Scope for potentially improving bounds  \\
\hline
\end{tabular}\\
\textcolor{blue}{384} refers to the $(3, 14)$ Record graph \\
\end{table}

An example from this catalog of $(3, g)$ graphs is provided in Figure  \ref{ex_inf}, which shows that $(3, 6)$ HBGs exist for all even orders greater than or equal to 14. An example from this catalog of $(3, g)$ graphs is provided in Figure  \ref{ex_inf}, which shows that $(3, 8)$ HBGs with symmetry factor $4$ exist for all even orders greater than or equal to 40. Hence, both these examples can be considered monotonic. \\ 
A non-montonic example from this catalog of $(3, g)$ graphs is provided in Figure  \ref{ex_mono_not}, which shows that $(3, 8)$ HBGs with symmetry factor $3$ exist for all even orders greater than or equal to 42, there does not exist a $(3, 8)$ HBGs with symmetry factor $3$ with order 36, even though the smallest $(3, 8)$ HBGs with symmetry factor $3$ is of order 30 and is the $(3, 8)$ cage.

\begin{figure}[htpb]
\centering
\caption{$(3, 6)$ HBGs exist for all even orders greater than or equal to 14.}
\label{ex_inf}  
\label{example_3_6_inf}
\begin{tabular}{l | l | l | l | l | l | l | c}
\noalign{\smallskip}
\hline
\noalign{\smallskip}  
Order & 10 & 12 & 14 & 16 & 18 & $20 + 2i,$ \\
 &  &  &  &  &  & $i = 0, 1, \ldots$ \\
\hline
\noalign{\smallskip}  
Existence& $\neg$ $\exists$ & $\neg$ $\exists$ & $\exists$  & $\exists$  & $\exists$  & $\exists$  \\
\hline
\noalign{\smallskip}  
Figure & & & {\includegraphics[scale=0.1]{./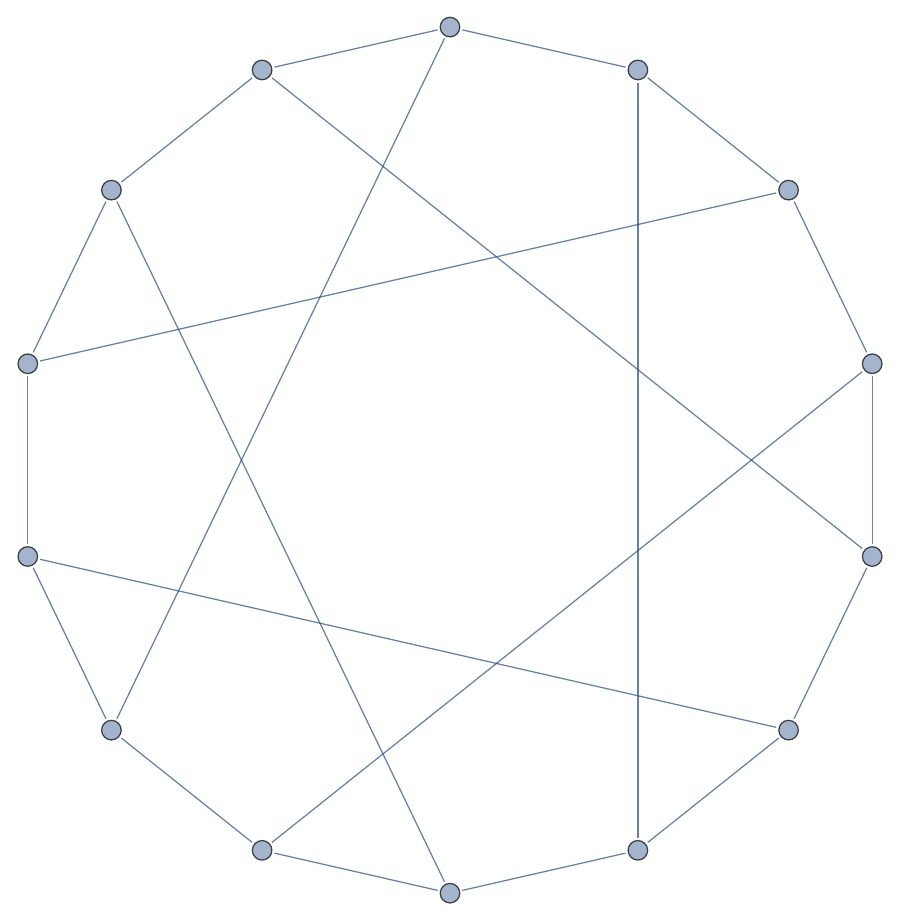}}  & {\includegraphics[scale=0.1]{./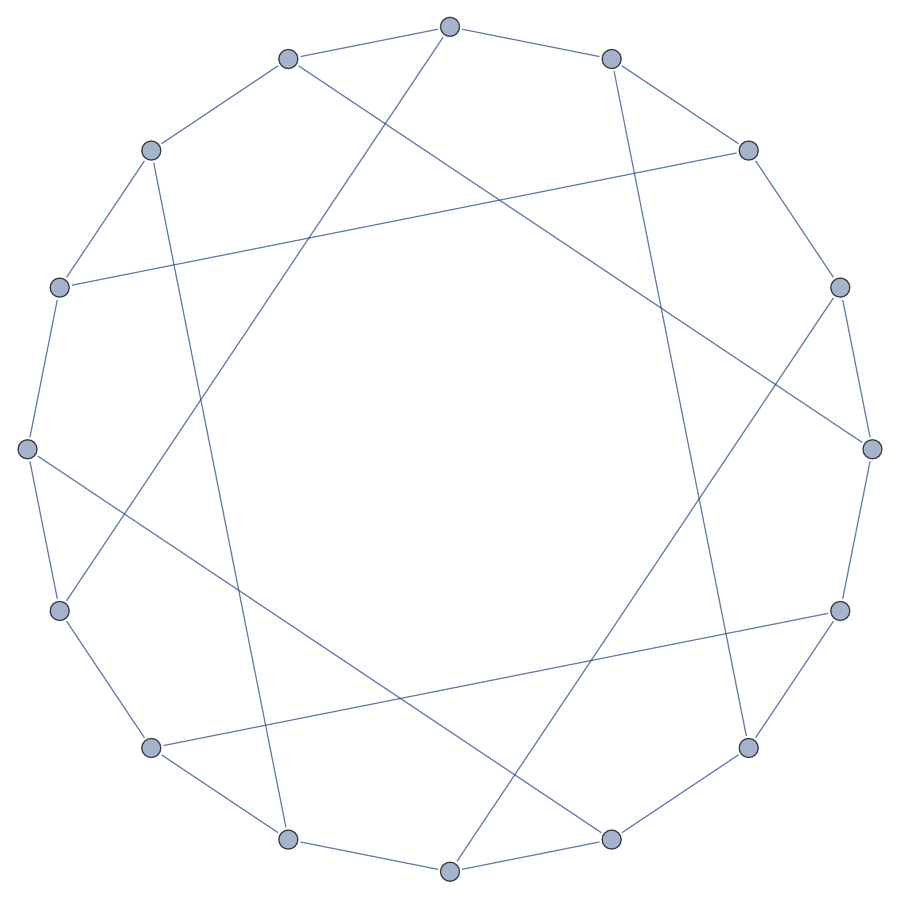}} & {\includegraphics[scale=0.1]{./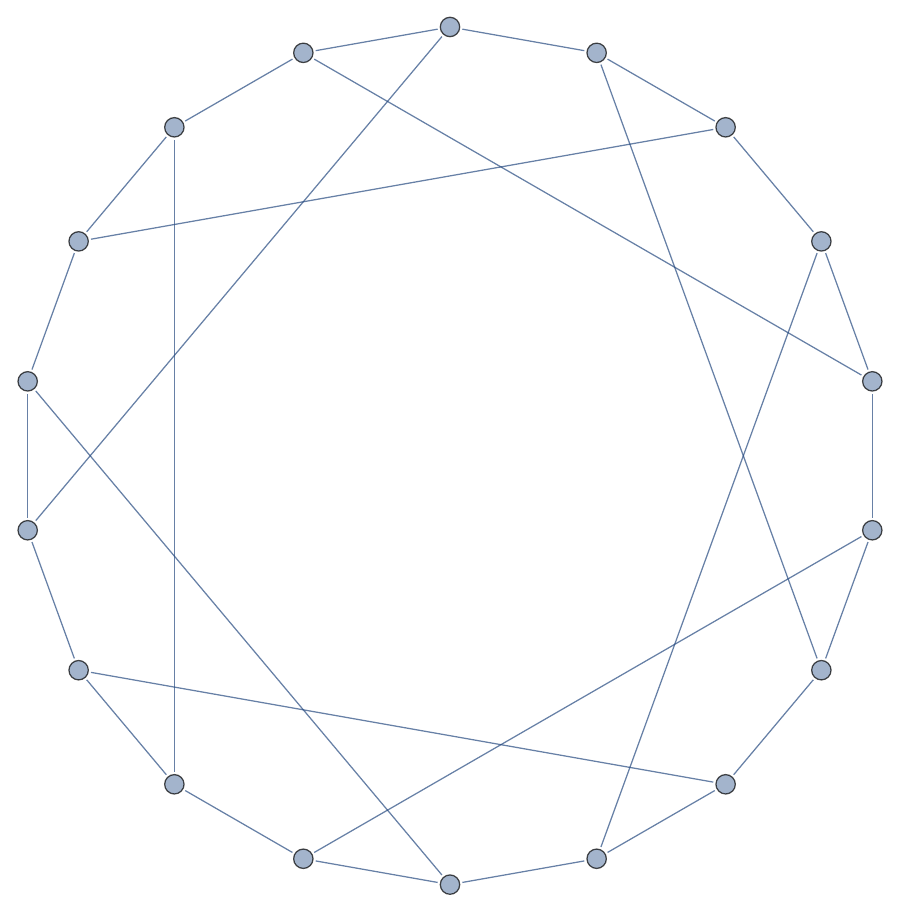}} & $\ldots$ \\
\hline
\noalign{\smallskip}  
D3   &   &   & 5 & 5 & 5 & 5  \\
& 
 \\
\hline
\end{tabular}
\end{figure}

\begin{figure}[htpb]
\centering
\caption{$(3, 8)$ HBGs with symmetry factor $4$ (\textbf{monotonic})} 
 \label{ex_mono} 
\label{table_graph_g8_sym4}
\begin{tabular}{l | l | l | l | l | l | l | c}
\noalign{\smallskip}
\hline
\noalign{\smallskip}  
Order & 32 & 40 & 48 & 56 & 64 & $72 + 8i,$ \\
 &  &  &  &  &  & $i = 0, 1, \ldots$ \\
\hline
\noalign{\smallskip}  
Existence& $\neg$ $\exists$ & $\exists$ & $\exists$  & $\exists$  & $\exists$  & $\exists$  \\
\hline
\noalign{\smallskip}  
Figure &  & {\includegraphics[scale=0.1]{./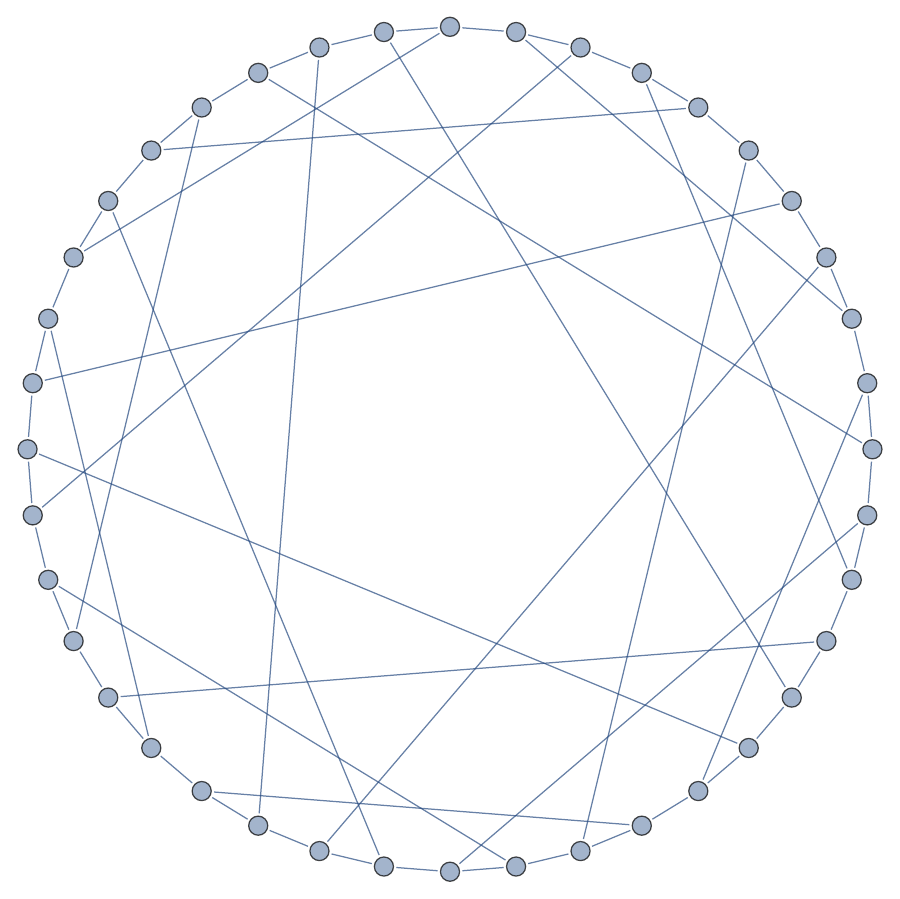}}  & {\includegraphics[scale=0.1]{./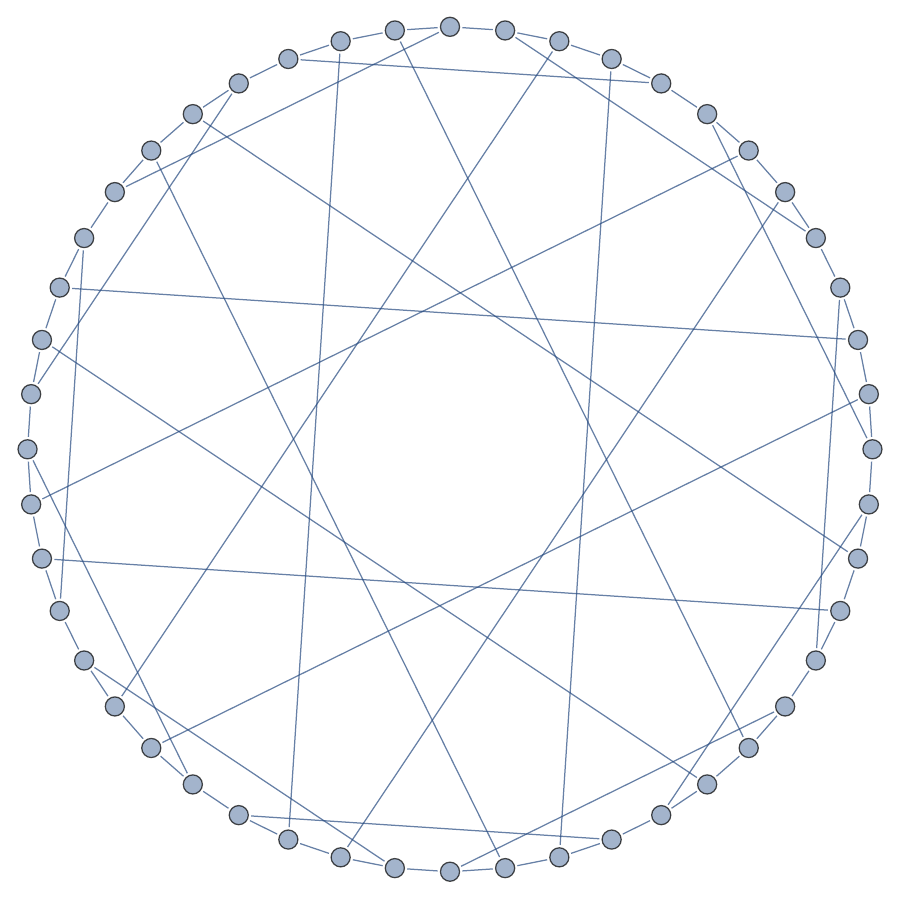}} & {\includegraphics[scale=0.1]{./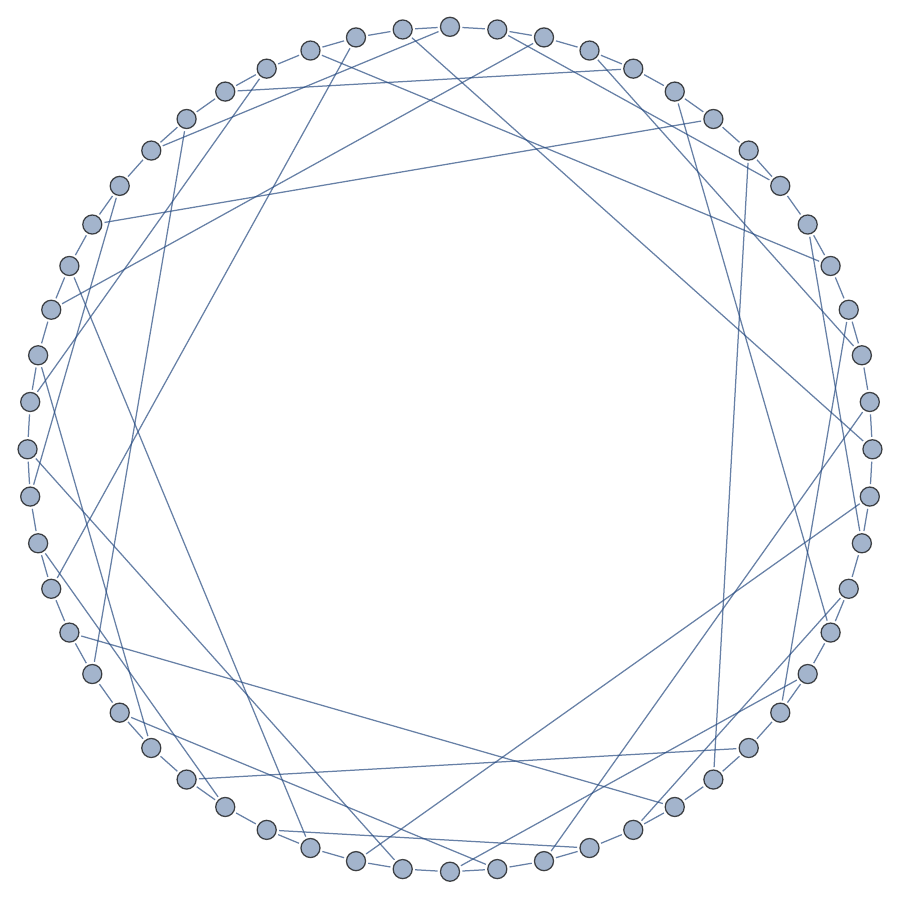}} & {\includegraphics[scale=0.1]{./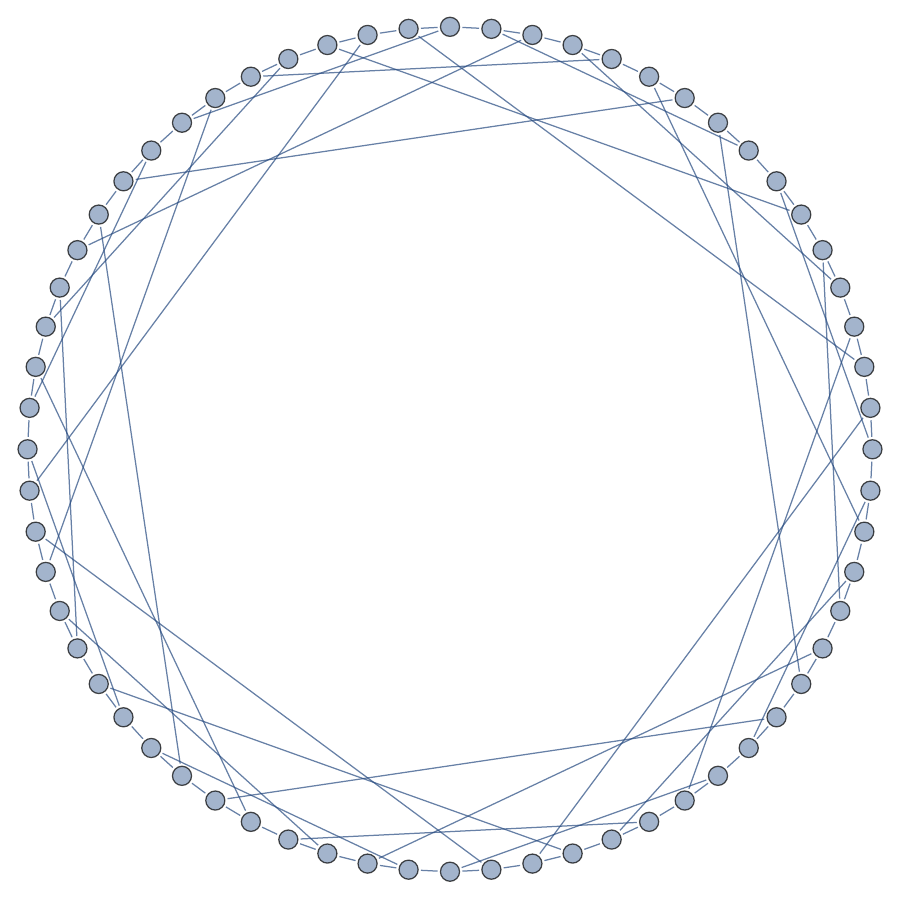}} & $\ldots$ \\
\hline
\noalign{\smallskip}  
D3   &   & 7\ 9\   &  7\ 19 & 7\ 9\  & 7\ 9\  & 7\ 9\   \\
        &   & 13\ 15  &  7\ 19\  & 13\ 15 & 13\ 15 & 13\ 15  \\
& 
 \\
\hline
\end{tabular}
\end{figure}

\begin{figure}[htpb]
\caption{$(3, 8)$ HBGs with symmetry factor $3$ (Non-monotonic)} 
\label{ex_mono_not} 
\centering
\label{table_graph_g8_sym3}
\begin{tabular}{l | l | l | l | l | l | l | c}
\noalign{\smallskip}
\hline
\noalign{\smallskip}  
Order & 30 & 36 & 42 & 48 & 54 & $60 + 6i,$ \\
 &  &  &  &  &  & $i = 0, 1, \ldots$ \\
\hline
\noalign{\smallskip}  
Existence&  $\exists$ & $\neg$ $\exists$ & $\exists$  & $\exists$  & $\exists$  & $\exists$  \\
\hline
\noalign{\smallskip}  
Figure & {\includegraphics[scale=0.1]{./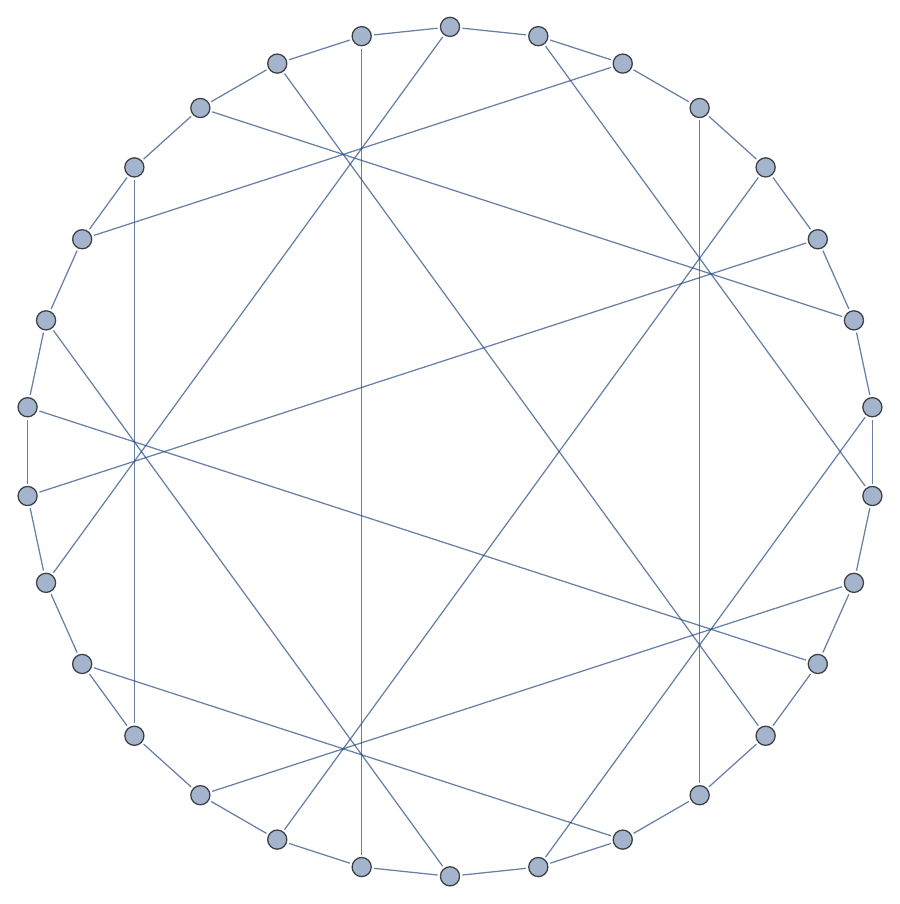}}  &  & {\includegraphics[scale=0.1]{./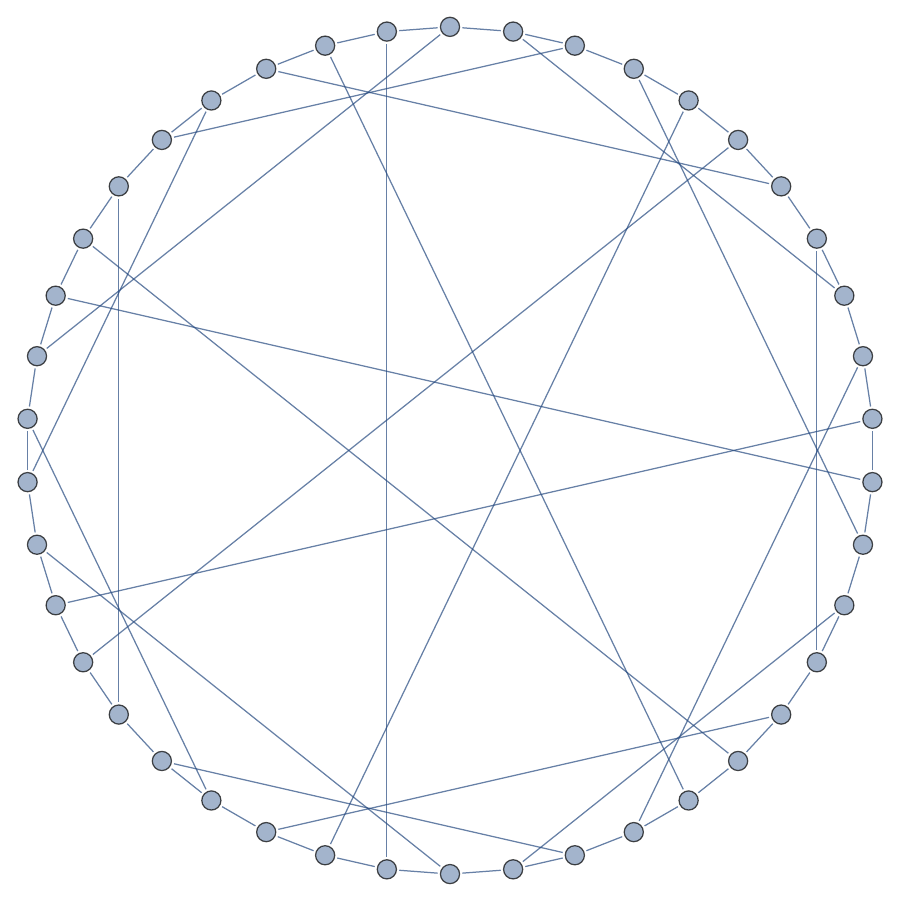}} & {\includegraphics[scale=0.1]{./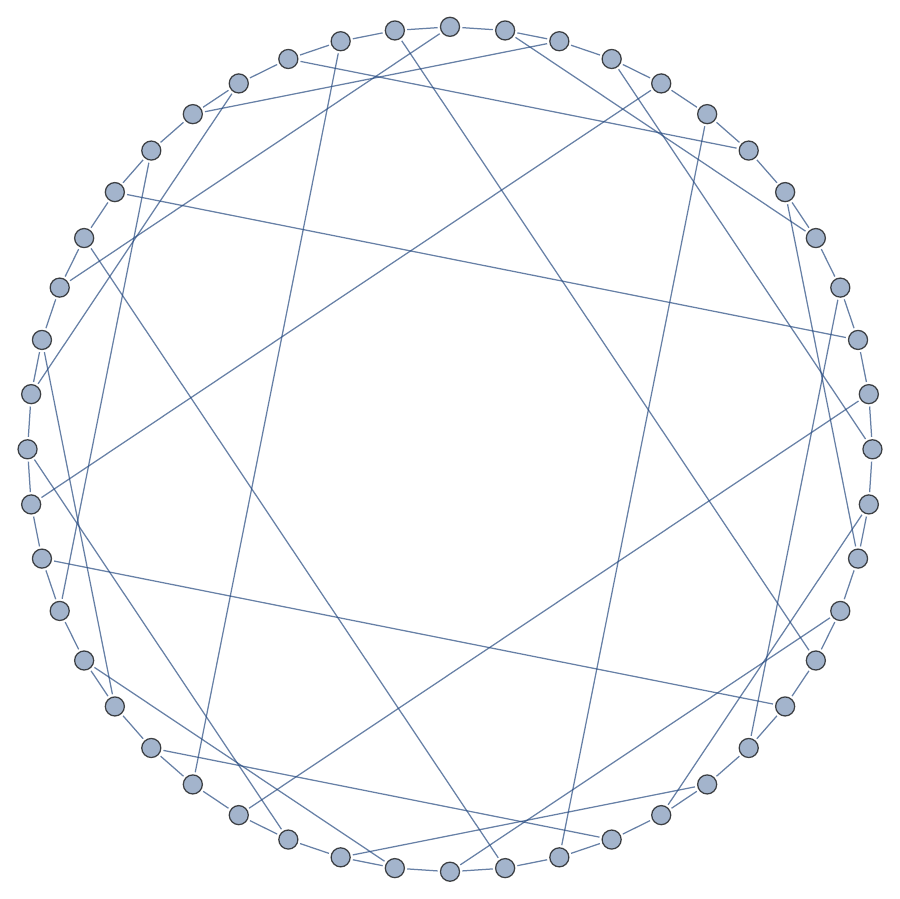}} & {\includegraphics[scale=0.1]{./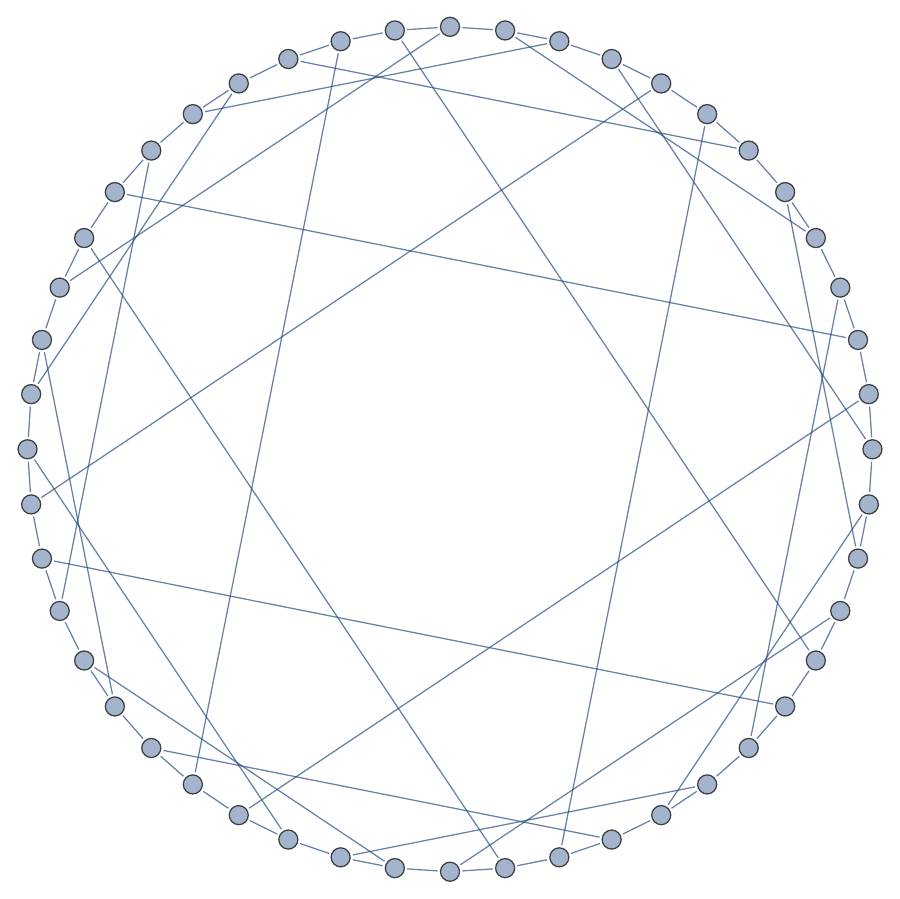}} & $\ldots$ \\
\hline
\noalign{\smallskip}  
D3      & 7\ 9\  &  &  7\ 9\  & 7\ 9\  & 7\ 9\  & 7\ 9\   \\
           & 17  & &  23  & 17 & 29 & 17  \\
& 
 \\
\hline
\end{tabular}
\end{figure}


The minimum observed symmetry factor and lower bound predicted by Theorem \ref{thm_sf_g} for $(3, g)$ HBGs for even values of girth $g$ has been plotted in Table \ref{table_thm_sf_g}.
The minimum observed symmetry factor exactly matches the lower bound predicted by Theorem \ref{thm_sf_g} for $(3, 6)$ HBGs, $(3, 8)$ HBGs, $(3, 10)$ HBGs and $(3, 12)$ HBGs, while the minimum observed symmetry factor is greater than the lower bound predicted By Theorem \ref{thm_sf_g} for $(3, 14)$ HBGs and $(3, 16)$ HBGs. Hence, it is indeed likely that there might exist a $(3, 16)$ HBG with symmetry factor less than $8$. 

\begin{table}
\centering
\caption{Validation of Theorem \ref{thm_sf_g} with experimental data}
\label{table_thm_sf_g}
\begin{tabular}{cccccllllll}
\hline\noalign{\smallskip}
$(3, g)$ & Minimum observed & Lower bound \\
  & symmetry factor & predicted by Theorem \ref{thm_sf_g} \\
\noalign{\smallskip}
\hline
\noalign{\smallskip}
$(3, 6)$ & $1$ & $1$ & 
\\
$(3, 8)$ & $2$ & $2$ 
\\
$(3, 10)$ & $2$ & $2$  
\\
$(3, 12)$ & $3$ & $3$ 
\\
$(3, 14)$ & $4$ & $3$ 
\\
$(3, 16)$ & $8$ & $4$ & \\

\hline
\end{tabular}
\end{table}

\subsection{Completeness and correctness of the catalog}
As pointed out earlier, this catalog \cite{CatalogPaper} does not aim to be complete by listing all HBGs for a given order, since currently only one HBG from a representative class of order, girth and symmetry factor is listed. There are many instances of multiple HBGs of the same order and girth on the catalog \cite{CatalogPaper}, with different symmetry factors. 
From the perspective of correctness of the catalog \cite{CatalogPaper}, order of HBG is correct by construction, girth of the HBG is checked using a computer program. As pointed out earlier, every HBG has a D3 chord index representation. The conclusion about non-existence of a HBG with specified order, girth and symmetry factor is arrived after computer search of the entire space of D3 chord indices corresponding to the order and symmetry factor.

\section{Findings}
\label{sec_findings}
The key findings of this work are as follows. Some of these points have been referred to in earlier sections, but have been listed explicitly here in order to make them clearer. 
A more detailed discussion of Table \ref{table_comp1} has been provided in \cite{OverallPaper2}.
\begin{table}
\caption{Comparison of orders for which $(3, g)$ graphs exist in catalog from \cite{OverallPaper2}}
\label{table_comp1}
\centering
\begin{tabular}{cccccc}
\hline
    $(3, g)$ & Until &Hamiltonian  & Vertex  &  Symmetric   \\
 graphs  & order & bipartite & -transitive &  \\
\noalign{\smallskip}
\hline
\noalign{\smallskip}
$(3, 6)$ & 50 & 19 & 19 & 10   \\
 $(3, 8)$ &90 & 29  & 21  & 6 \\
$(3, 10)$ & 160 & 29 & 15   & 7 \\
$(3, 12)$ & 400& 84 &26  & 16 \\
$(3, 14)$ & 1000& 164 & 35  & 11\\
\hline
\end{tabular}
\end{table}

\begin{enumerate}
\item \textbf{Finding 1}: \textbf{Identification of HBGs as a promising class of cubic graphs} \\
HBGs have been identified, as a promising class of cubic graphs that can lead to a catalog of $(3, g)$ graphs for even girth $g$ with graphs for more orders than other listings, that is also expected to contain a $(3,g)$ graph with minimum order. This is the first known work to list HBGs of specified even girths, with the obtained catalog having graphs for larger number of distinct orders than the previous lists, and includes the $(3,g)$ cage for $g = 6, 8$ and $12$, one $(3, 10)$ cage and the $(3, 14)$ record graph. 
\item \textbf{Finding 2}: \textbf{This catalog has $(3, g)$ graphs for more orders compared to other lists}
\begin{enumerate}
\item Result on existence of a $(3, g)$ HBG for each order for which $(3, g)$ vertex-transitive graphs exist, whether bipartite or non-bipartite for chosen intervals of comparison for even girth until $g = 14$. For comparison with vertex-transitive graphs, we consider $(3, 6)$ graphs until order $50$, $(3, 8)$ graphs until order $90$, $(3, 10)$ graphs until order $160$, $(3, 12)$ graphs until order $400$ and $(3, 14)$ graphs until order $1000$, as shown in Table \ref{table_comp1}.
\item The catalog provides detailed information on existence of $(3,g)$ HBGs for each order, with symmetry factors for which graphs have been found, symmetry factors for which graphs have been proven not to exist, and symmetry factors for which the existence of the graph have been found to be \textit{inconclusive}. The details of this are provided in \cite{OverallPaper2}.
\item Large number of $(3, g)$ graphs outside the vertex-transitive class: There are many orders for which a $(3, g)$ vertex-transitive graph does not exist, for which $(3, g)$ HBG exists on this catalog, as seen in Table \ref{table_comp1}.
\item The catalog of $(3,g)$ HBGs are \textit{exhaustive} with graphs for each order in specified range with proof of non-existence for orders of graphs not listed for $g = 6, 8$ and \textit{partial} for $10, 12, 14$, and $16$ due to some \textit{inconclusive} results for some subclasses.
\end{enumerate}
\item \textbf{Finding 3}: \textbf{Graph representation relevant for this research}
\begin{enumerate}
\item Symmetry factor has been introduced for representing the level of rotational symmetry in cubic Hamiltonian bipartite graphs. Symmetry factor allows decomposing the problem of listing $(3, g)$ HBGs for even girth $g$ for a range of orders into sub-problems of listing $(3, g)$ HBGs for a specified symmetry factor $b$ for a range of orders, and hence allows listing of $(3, g)$ HBGs for more orders.
\item D3 chord index notation has been introduced, which is twice as compact as the LCF notation for representing cubic Hamiltonian bipartite graphs. 
\item The D3 chord index notation can specify an infinite family of HBGs, 
for different orders greater than a threshold value.
\begin{example}
D3 chord index 5 leads to a $(3, 6)$ HBG for all even orders greater than or equal to $14$ as shown in Figure \ref{example_3_6_inf}. This has been proved theoretically and practically verified until order $20008$.
\end{example}
\begin{example}
It is practically observed that D3 chord indices \\$15\ 53\ 73\ 139\ 243\ 267\ 471\ 651$ leads to $(3, 16)$ HBGs for orders $2352 + 16i$ for integers $i \ge 0$, for symmetry factor $8$. In addition, it is also observed that the above mentioned D3 chord indices also lead to $(3, 16)$ HBGs for the following orders \\$1824\ 1840\ 1936\ 2016\ 2032\ 2112\ 2144\ 2160\ 2176\ 2240\ 2256\ 2272$\\ $2288\, 2304\, 2320$.
\end{example} 
\end{enumerate}
\item
\begin{enumerate}
\item \textbf{Finding 4}: \textbf{Extremal graph theory perspective} \\
The following questions from an extremal graph theory perspective have been raised.
\begin{question}
\label{egt_q1}
Given a specified symmetry factor $b$, what is the minimum order for the existence of a $(3, g)$ HBG?
\end{question}
\begin{question}
\label{egt_q2}
“Given a specified even value of girth $g$, what is the minimum value of symmetry factor $b$ for existence of a $(3,g)$ HBG?”
\end{question}
Question \ref{egt_q1} and question \ref{egt_q2} are important in order to understand the relationship between symmetry factor $b$, girth $g$ and order for $(3, g)$ HBGs. 
This research provides lower bounds and upper bounds on minimum order for the existence of a $(3, g)$ Hamiltonian bipartite graph for a specified symmetry factor $b$ for even girth $g$ in Question \ref{egt_q1}, with the lower bound being equal to the upper bound for many cases. For example, for $(3, 14)$ Hamiltonian bipartite graphs the corresponding lower bound and upper bounds are equal for symmetry factors 4, 5 and 6, as shown in Table \ref{table_comp314}.
This research provides a theoretical lower bound and empirical data for the minimum value of symmetry factor $b$ for a $(3, g)$ Hamiltonian bipartite graph for a range of even girth $g$ for Question \ref{egt_q2}. For example, the minimum value of symmetry factor for $(3, 6)$ HBGs is 1, and for $(3, 8)$ HBGs is 2, as shown in Table \ref{table_thm_sf_g}.
\item \textbf{Finding 5}: \textbf{Cage problem perspective} \\
Results on non-existence of $(3,g)$ HBGs for some orders and symmetry factors that show the emptiness of subclasses of graphs are of wider interest. For example, for $(3, 14)$ graphs, the lower bound is 258 and upper bound is 384, the non-existence of $(3, 14)$ HBGs between orders 258 and 384 for symmetry factors 4, 5, 6, and emptiness of many other subclasses as per the non-existence lists, has been shown as discussed in \cite{3_14Paper}.
\end{enumerate}
\end{enumerate}

\section{Limitations and open problems}
\label{sec_conclusion_graph_analysis}
\label{chap_14_16_conclu}
\label{chap_6_8_10_12_conclu}

\subsection{Restrictions to this approach}

Some of the restrictions and limitations of this approach are as follows.
\begin{enumerate}
\item The scope of this approach is restricted to cubic HBGs. 
\item This approach is restricted to even values of girth.
\item Results on existence or emptiness of some subclasses of $(3, g)$ HBGs for some orders, symmetry factor and even girth $g$ are \textit{inconclusive}.
\item The size of the automorphism group for each graph is not computed.
\end{enumerate}

\subsection{Open problems}

The following open problems on $(3, g)$ graphs arise out this research.\\ 
\begin{open problem}
Given an arbitrary even number $2m$ greater than 14, what is the maximum attainable girth of cubic graphs with order $2m$?
\end{open problem}
In general, it might not be very straightforward to find the maximum attainable girth of cubic bipartite graphs for a specified even order. The catalog \cite{CatalogPaper} suggests that the maximum attainable girth for cubic graphs for orders $14 \le 2m \le 28$ is 6, for orders $34 \le 2m \le 90$ is 8, and it is well known that it is 8 for order 30, since $n(3, 8) = 30$.
\begin{open problem} 
\label{conj_chord_in3}
\label{conj_chord_in4}
Let $g \ge 6$ be the maximum attainable girth in the space of cubic bipartite graphs with specified even order $2m$ greater than 14. Does there always exist a $(3, g_{1})$ HBG with order $2m$ for all even values of girth $g_{1}$ satisfying $6 \le g_{1} \le g$?
\end{open problem}
The catalog \cite{CatalogPaper} introduced in this paper suggests that the answer to open problem \ref{conj_chord_in4} might be yes, but it is not currently known whether this is true in general. 
As pointed out earlier, the research of Haythorpe 2014 \cite{Haythorpe2014} suggests that non-Hamiltonian cubic graph of minimum order is always larger than a Hamiltonian cubic graph of minimum order. It is indeed likely that this might be true in general. Hence, open problem \ref{conj_chord_in5} is raised.
\begin{open problem} 
\label{conj_chord_in5}
Given $m \in \mathbb{N}$, if $g$ is the maximum attainable girth in the space of all possible cubic bipartite graphs with order $2m$, then does there exist a $(3, g)$ bipartite graph with order $2m$ that is not Hamiltonian?
\end{open problem}
As pointed out earlier, it is well known that all the known $(3, g)$ cages for even girth $g$ are Hamiltonian. The $(3, 7)$ cage found by McGee 1960 \cite{McGee7} and $(3, 11)$ cage found by Balaban 1973 \cite{Balaban11} are known to be Hamiltonian, and the eighteen $(3, 9)$ cages which were shown to be complete by Brinkmann et al. 1995 \cite{Brinkmann_3_9} have been checked by this author and found to be Hamiltonian as well. Hence, open problem \ref{conj_chord_Hcg} is raised.
\begin{open problem}
\label{conj_chord_Hcg}
Does every $(3, g)$ cage, other than the known exception of the $(3, 5)$ cage, the Petersen graph, have a Hamiltonian cycle?
\end{open problem}
Open problem \ref{Q6_d3} ia raised on size of its automorphism group of a cubic HBG. In general, finding the automorphism group of a graph is known to be a difficult problem. Quoting from Lauri J. et al \cite{Sym_book}, ``The problem of determining whether a graph has a nontrivial automorphism is in NP, since, given such a permutation of the vertices, it is easy to determine in polynomial time that it is an automorphism.''
If we consider the class of circulant graphs, the automorphism group is known in very few cases as pointed out by Morris \cite{JoyMorris}.
\begin{open problem} \textbf{For size of automorphism group} \\
\label{Q6_d3}
Given the D3 chord indices for a cubic HBG, what can one say about the size of its automorphism group?
\end{open problem}

\bibliographystyle{plain} 
\bibliography{vivek}

\end{document}